\documentclass[12pt,oneside,reqno,bibliography=totoc]{scrartcl}
\usepackage{kucuk_cihan}

\newcommand {\czer}[2] {\mathbf{C}_{#1}(#2)}

\newlist{enume}{enumerate}{1}
\setlist[enume]{label={\arabic*.}}

\begin{document}

\title{Notes on splitting fields}
\author{C\.{i}han Bahran}

\date{}


\maketitle

\begin{conv}
 Throughout this document, we fix a field $k$ and a (unital associative) $k$-algebra $A$.
\end{conv}


Let $k \subseteq F$ be a field extension. Then we naturally have a restriction functor $\alg{F} \rarr \alg{k}$ which has the left adjoint $F \otimes_{k} - \colon \alg{k} \rarr \alg{F}$. The unit of this adjunction on $A$ is the $k$-algebra homomorphism 
\begin{align*}
 \varphi \colon A &\rarr F \otimes_{k} A \\
 a &\mapsto 1 \otimes a
\end{align*}
Note that $\varphi$ is injective. Now $\varphi$ yields a restriction functor $\varphi_{*} \colon \lMod{(F \otimes_{k} A)} \rarr \lMod{A}$. $\varphi_{*}$ has a left adjoint, say $\varphi^{*} \colon \lMod{A} \rarr \lMod{(F \otimes_{k} A)}$.
\begin{prop}
 $\varphi^{*}$ is naturally isomorphic to $F \otimes_{k} -$.
\end{prop}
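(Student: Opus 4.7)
The plan is to invoke the standard fact that, for any ring homomorphism, extension of scalars is left adjoint to restriction. Applied to $\varphi$, this identifies $\varphi^{*}$ up to natural isomorphism with $(F \otimes_{k} A) \otimes_{A} -$, where the tensor is formed using the right $A$-action on $F \otimes_{k} A$ induced by $\varphi$, and the resulting $(F \otimes_{k} A)$-structure acts on the first tensor factor. Given this, it then suffices to construct a natural isomorphism of $(F \otimes_{k} A)$-modules
\[
 (F \otimes_{k} A) \otimes_{A} M \;\cong\; F \otimes_{k} M
\]
for each $M \in \lMod{A}$, where $F \otimes_{k} M$ is made into an $(F \otimes_{k} A)$-module via $(x \otimes a)(y \otimes m) = xy \otimes am$ (well-defined by the universal property of the $k$-tensor product).

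I would then exhibit two explicit maps. The forward map
\[
 \Phi_{M} \colon (F \otimes_{k} A) \otimes_{A} M \rarr F \otimes_{k} M, \qquad (x \otimes a) \otimes_{A} m \mapsto x \otimes am,
\]
comes from the $k$-bilinear assignment $(x \otimes a, m) \mapsto x \otimes am$, which is $A$-balanced and hence descends to $\otimes_{A}$. The backward map
\[
 \Psi_{M} \colon F \otimes_{k} M \rarr (F \otimes_{k} A) \otimes_{A} M, \qquad x \otimes m \mapsto (x \otimes 1) \otimes_{A} m,
\]
is well-defined by $k$-bilinearity of $(x, m) \mapsto (x \otimes 1) \otimes_{A} m$.

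The remaining checks are all verifiable on simple tensors: that $\Phi_{M}$ and $\Psi_{M}$ are mutually inverse (the nontrivial direction uses the $A$-balancing identity $(x \otimes a) \otimes_{A} m = (x \otimes 1) \otimes_{A} am$), that $\Phi_{M}$ is $(F \otimes_{k} A)$-linear, and that the assignment $M \mapsto \Phi_{M}$ is natural in $M$. None of these is substantive; the only real obstacle is keeping the two kinds of tensor products notationally separate and carefully invoking the universal property in each direction. Once assembled, uniqueness of left adjoints (up to canonical natural isomorphism) yields $\varphi^{*} \cong F \otimes_{k} -$ as claimed.
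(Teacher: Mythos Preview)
Your argument is correct, but it takes a different route from the paper. The paper never invokes the general identity $\varphi^{*} \cong (F \otimes_{k} A) \otimes_{A} -$; instead it establishes directly that $F \otimes_{k} -$ is left adjoint to $\varphi_{*}$ by exhibiting the natural bijection $\Hom_{A}(M, \varphi_{*}(N)) \cong \Hom_{F \otimes_{k} A}(F \otimes_{k} M, N)$, obtained by starting from the ordinary $k$-to-$F$ extension-of-scalars adjunction $\Hom_{k}(M,N) \cong \Hom_{F}(F \otimes_{k} M, N)$ and checking that the unit and counit maps carry $A$-linear maps to $(F \otimes_{k} A)$-linear maps and back. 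Your approach instead quotes the standard description of $\varphi^{*}$ and then simplifies the bimodule tensor product $(F \otimes_{k} A) \otimes_{A} M$ to $F \otimes_{k} M$ via the explicit inverse pair $\Phi_{M}, \Psi_{M}$. Your method is more modular (it isolates the tensor-cancellation step cleanly and would generalize verbatim to any base-change along a ring map $R \to S$), while the paper's method is more self-contained since it does not assume the reader already knows the $B \otimes_{A} -$ description of extension of scalars. Both conclude via uniqueness of left adjoints.
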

\begin{proof}
 The functor $F \otimes_{k} -$ here is not quite the same with the one between the categories of algebras. Note that both $A$ and $F$ are $k$-algebras. Hence if we have a left $A$-module $M$ and an $F$-module V, then $V \otimes_{k} M$ has a natural $F \otimes_{k} A$-module structure. Taking $V = F$, we see that $F \otimes_{k} M$ is a left $F \otimes_{k} A$-module. So the assignment 
\begin{align*}
 F \otimes_{k} - \colon
 \lMod{A} &\rarr \lMod{(F \otimes_{k} A)} \\
 M &\mapsto F \otimes_{k} M
\end{align*}
is well-defined on objects. For morphisms, there is a natural candidate: if $f \colon M \rarr M'$ is an $A$-linear map, then let $(F \otimes_{k} -)(f) = \id_{F} \otimes f \colon F \otimes_{k} M \rarr F \otimes_{k} M'$. We have
\begin{align*}
 (\id_{F} \otimes f) \left( (\alpha \otimes r)\cdot(\beta \otimes m) \right)
 &= (\id_{F} \otimes f)(\alpha\beta \otimes rm) \\
 &= \alpha\beta \otimes f(rm) \\
 &= \alpha\beta \otimes rf(m) \\
 &= (\alpha \otimes r)\cdot(\beta \otimes f(m)) \\
 &= (\alpha \otimes r)\cdot(\id_{F} \otimes f)(\beta \otimes m)
\end{align*}
for any $\alpha,\beta \in F$, $r \in A$, $m \in M$, that is, $\id_{F} \otimes f$ is really a left $F \otimes_{k} A$-module homomorphism. So we have well-defined assignments on both the objects and morphisms for $F \otimes_{k} -$ which satisfy the functor axioms.

Now let's move on to the adjunction with $\varphi_{*}$. Given a left $A$-module $M$ and a left $F \otimes_{k} A$-module $N$, we seek an isomorphism
\begin{align*}
 \Hom_{A}(M, \varphi_{*}(N)) \cong \Hom_{F \otimes_{k} A}(F \otimes_{k} M, N)
\end{align*}
that is natural in $M$ and $N$.
(Note that $N$ is an $A$-module via $\varphi\colon r \mapsto 1_{F} \otimes r$)\\
We already have a natural isomorphism
\begin{align*}
 \eta: \Hom_{k}(M,N) &\rarr \Hom_{F}(F \otimes_{k} M, N) \\
 f &\mapsto \eta(f): (\alpha \otimes m) \mapsto \alpha f(m) = (\alpha \otimes 1_{A})f(m)
\end{align*}
by the usual theory of extension of scalars since $N$ is an $F$-vector space via $\alpha n = (\alpha \otimes 1_{A})n $. We need to check that if $f$ is $A$-linear then $\eta(f)$ is $F \otimes_{k} A$-linear. Indeed, the routine check
\begin{align*}
 \eta(f)((\beta \otimes r) \cdot (\alpha \otimes m)) 
 &= \eta(f)(\beta\alpha \otimes rm)\\
 &= (\beta\alpha \otimes 1_{A}) f(rm) \\
 &= (\beta\alpha \otimes 1_{A}) (1_{F} \otimes r) f(m) \\
 &= (\beta\alpha \otimes r)f(m) \\
 &= (\beta \otimes r)(\alpha \otimes 1_{A})f(m) \\
 &= (\beta \otimes r)\cdot \eta(f)(\alpha \otimes m)
\end{align*}
verifies so. The inverse of $\eta$ is given by
\begin{align*}
 \theta \colon \Hom_{F}(F \otimes_{k} M, N) &\rarr \Hom_{k}(M,N) \\
 g &\mapsto \theta(g) \colon m \mapsto g(1_{F} \otimes m)
\end{align*}
And if $g$ is $F \otimes_{k} A$-linear, then we have
\begin{align*}
 \theta(g)(rm) 
 &= g(1_{F} \otimes rm) \\
 &= g((1_{F} \otimes r) \cdot (1_{F} \otimes m)) \\
 &= (1_{F} \otimes r) \cdot g(1_{F} \otimes m) \\
 &= r \cdot g(1_{F} \otimes m) \\
 &= r \cdot \theta(g)(m) \, ,
\end{align*}
that is, $\theta(g)$ is $A$-linear. Thus $\eta$ and $\theta$ restrict to the desired isomorphism and they are already natural in $M$ and $N$.
\end{proof}

Let's take a step back and look at what we've done. Everything stemmed from the field extension $k \subseteq F$ which gives us a way to induce $k$-algebras to $F$-algebras and to induce modules over a fixed $k$-algebra $A$ to modules over the induced $F$-algebra. We called this induction functor $\varphi^{*}$ above and saw that it is naturally isomorphic to $F \otimes_{k} -$. I find it more suggestive to write $(-)^{F}$ for this functor and also $A^{F}$ for the induced algebra $F \otimes_{k} A$.
\begin{rem} \label{zero}
 Let $M$ be a left $A$-module. Then 
 \(\dim_{F} M^{F} = \dim_{k} M \), 
and in particular $M = 0$ if and only if $M^{F} = 0$.
\end{rem}
\begin{prop} \label{topdown}
 Let $M$ be a left $A$-module and $k \subseteq F$ a field extension. 
\begin{birki}
 \item If $M^{F}$ is indecomposable as a left $A^{F}$-module, then $M$ is indecomposable.
 \item If $M^{F}$ is simple as a left $A^{F}$-module, then $M$ is simple.
 \end{birki}

\end{prop}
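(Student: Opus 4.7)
The plan is to prove both parts by contraposition, using Remark~\ref{zero} to detect nonvanishing after extension of scalars together with two formal properties of the functor $(-)^{F} = F \otimes_{k} -$: it is additive (in fact a left adjoint by the previous proposition, so it preserves finite direct sums), and it is exact on underlying $k$-vector spaces since $F$ is free, hence flat, as a $k$-module. The extra ingredient beyond these categorical facts is that, by the functoriality spelled out in the previous proposition, $A$-linear maps really do become $A^{F}$-linear after applying $(-)^{F}$, so decompositions and submodule inclusions are preserved on the nose as $A^{F}$-module structures and not merely as $F$-vector space structures.

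For (1), I would suppose for contradiction that $M = M_{1} \oplus M_{2}$ as left $A$-modules with both $M_{i} \neq 0$. Applying $(-)^{F}$ yields
\[
 M^{F} \;\cong\; M_{1}^{F} \oplus M_{2}^{F}
\]
as left $A^{F}$-modules, and by Remark~\ref{zero} each summand $M_{i}^{F}$ is nonzero. This contradicts the assumed indecomposability of $M^{F}$.

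For (2), I would suppose that $N \subseteq M$ is an $A$-submodule with $N \neq 0$ and $N \neq M$. Exactness of $(-)^{F}$ applied to the short exact sequence $0 \to N \to M \to M/N \to 0$ of $A$-modules produces a short exact sequence
\[
 0 \rarr N^{F} \rarr M^{F} \rarr (M/N)^{F} \rarr 0
\]
of $A^{F}$-modules. By Remark~\ref{zero}, $N^{F} \neq 0$ since $N \neq 0$, and $(M/N)^{F} \neq 0$ since $M/N \neq 0$; hence $N^{F}$ is a proper nonzero $A^{F}$-submodule of $M^{F}$, contradicting simplicity.

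No step looks like a genuine obstacle: the whole argument rests on flatness of $F/k$ and additivity of $(-)^{F}$, both of which are free. The only thing I want to be careful about is not confusing the $F$-vector-space structure on $M^{F}$ (which gave us Remark~\ref{zero}) with the $A^{F}$-module structure (which is what we need to violate), and the previous proposition ensures these fit together correctly.
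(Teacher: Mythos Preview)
Your proof is correct and follows essentially the same route as the paper's: contraposition, additivity and exactness of $(-)^{F}$, and Remark~\ref{zero} to detect nonvanishing. The only nuance the paper adds is to handle explicitly the degenerate case $M = 0$ (since by convention the zero module is neither indecomposable nor simple), which is immediate from Remark~\ref{zero} as well.
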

\begin{proof}
 Better to use contrapositives. 
 
 For (1), suppose $M$ is not indecomposable, that is, either $M=0$ or $M = U \oplus V$ with $U,V \neq 0$ being $A$-submodules of $M$. In the former case $M^{F} = 0$. In the latter case, we have an isomorphism
 \[
  M^{F} \cong U^{F} \oplus V^{F}
 \] 
 of left $A^{F}$-modules where $U^{F}, V^{F} \neq 0$ by Remark \ref{zero}. In any case, $M^{F}$ is not decomposable. 
 
 For (2), suppose $M$ is not simple, that is, either $M=0$ or there exists a short exact sequence
 \[
 0 \to N \to M \to Q \to 0
 \]
 of left $A$-modules with $N,Q \neq 0$. In the former case $M^{F} = 0$. In the latter case, as $(-)^{F}$ is an exact functor, we get a short exact sequence
 \[
 0 \to N^{F} \to M^{F} \to Q^{F} \to 0
 \]
 of left $A^{F}$-modules with $N^{F},Q^{F} \neq 0$ by Remark \ref{zero}. In any case, $M^{F}$ is not simple. 
\end{proof}

Given left $A$-modules $M$ and $N$, via the functor $(-)^{F} \colon \lMod{A} \rarr \lMod{A^{F}}$, we have a map
\begin{align*}
 \Hom_{A}(M,N) \rarr \Hom_{A^{F}}(M^{F},N^{F}) \, .
\end{align*}
This map is actually $k$-linear. Since the right hand side is an $F$-vector space, via the adjunction of $(-)^{F}$ with the restriction, we get an $F$-linear map
\begin{align*}
\theta \colon \left[ \Hom_{A}(M,N) \right]^{F} \rarr \Hom_{A^{F}}(M^{F},N^{F}) \, .
\end{align*}

\begin{lem}\label{iso}
For every pair of left $A$-modules $M,N$, the natural $F$-linear map 
\begin{align*}
\theta \colon \left[ \Hom_{A}(M,N) \right]^{F} \rarr \Hom_{A^{F}}(M^{F},N^{F})
\end{align*} 
defined above is injective. If moreover $\dim_{k} M < \infty$, then $\theta$ is an isomorphism.
\end{lem}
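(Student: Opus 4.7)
The plan is to handle injectivity in full generality and surjectivity separately under the hypothesis $\dim_{k} M < \infty$.

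\textbf{Injectivity.} I would exploit that $F$ is a free $k$-module. Fix a $k$-basis $\{\alpha_i\}_{i \in I}$ of $F$, so that each element of $[\Hom_A(M,N)]^F = F \otimes_k \Hom_A(M,N)$ has a unique expression as a finite sum $\sum_i \alpha_i \otimes f_i$ with $f_i \in \Hom_A(M,N)$. If $\theta\bigl(\sum_i \alpha_i \otimes f_i\bigr) = 0$, then evaluating on $1 \otimes m \in M^F$ yields $\sum_i \alpha_i \otimes f_i(m) = 0$ in $F \otimes_k N$; using the same basis of $F$ to identify $F \otimes_k N \cong \bigoplus_i N$, one reads off $f_i(m) = 0$ for every $i$ and every $m$, hence every $f_i = 0$.

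\textbf{Isomorphism under $\dim_{k} M < \infty$.} The idea is to factor $\theta$ through the classical ``extension of scalars for Hom'' isomorphism
\begin{equation*}
\Theta \colon F \otimes_k \Hom_k(M, N) \longrightarrow \Hom_F(M^F, N^F)
\end{equation*}
defined by the same formula as $\theta$ but without reference to the $A$-action. This $\Theta$ is an isomorphism precisely because $\dim_{k} M < \infty$ (reduce via $\Hom_k(M,N) \cong M^{*} \otimes_k N$ and distribute $F$ across the tensor product). By construction, $\theta$ is simply the restriction of $\Theta$ to the subspace $F \otimes_k \Hom_A(M,N)$, automatically landing in $\Hom_{A^F}(M^F, N^F)$. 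So to prove surjectivity, given any $g \in \Hom_{A^F}(M^F, N^F)$ I would use $\Theta^{-1}$ to write $g = \Theta\bigl(\sum_i \alpha_i \otimes h_i\bigr)$ with $h_i \in \Hom_k(M,N)$ determined by a fixed basis $\{\alpha_i\}$ of $F$, and then show that $A^F$-linearity of $g$ forces each $h_i$ to be $A$-linear. Expanding both sides of $g((1 \otimes a)(1 \otimes m)) = (1 \otimes a)\, g(1 \otimes m)$ through $\Theta$ gives $\sum_i \alpha_i \otimes h_i(am) = \sum_i \alpha_i \otimes a\, h_i(m)$, and the $k$-linear independence of $\{\alpha_i\}$ yields $h_i(am) = a\, h_i(m)$ componentwise.

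The main obstacle is really the existence of $\Theta$ as an isomorphism, which is where the hypothesis $\dim_{k} M < \infty$ enters crucially; once $\Theta$ is in hand, the compatibility with the $A$-structure is a clean coefficient-extraction argument entirely analogous to the injectivity step.
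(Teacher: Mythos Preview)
Your proof is correct and rests on the same idea as the paper's: fix a $k$-basis $\{\alpha_i\}$ of $F$ and extract coefficients. The injectivity arguments are identical. For surjectivity the paper works entirely by hand---given $g$, it writes $g(1\otimes m)=\sum_i \alpha_i\otimes g_i(m)$, checks each $g_i$ is $A$-linear, and then uses $\dim_k M<\infty$ via a finite-rank argument (the composite $M\to M^F\xrightarrow{g} N^F\cong\bigoplus_i N$ has finite $k$-rank, so only finitely many $g_i$ are nonzero)---whereas you first invoke the classical isomorphism $\Theta\colon F\otimes_k\Hom_k(M,N)\to\Hom_F(M^F,N^F)$, into which the finiteness is already absorbed, and then run the same coefficient extraction to recover $A$-linearity of the components. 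The substance is the same; your version trades the explicit finite-rank step for an appeal to the known $k$-linear result, which is a clean way to isolate where the hypothesis $\dim_k M<\infty$ is used.
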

\begin{proof}
 Let $\{ \alpha_{i}: i \in I \}$ be a basis of $F$ over $k$. Note that an arbitrary element of $\left[ \Hom_{A}(M,N)\right]^{F} = F \otimes_{k} \Hom_{A}(M,N)$ is of the form 
\begin{align*}
 \sum \alpha_{i} \otimes f_{i}
\end{align*}
where $\alpha_{i} \in F$ and $f_{i} \colon M \rarr N$ is $A$-linear. If such an element is in $\ker \theta$, then
\begin{align*}
 0 &= \sum \alpha_{i} (f_{i})^{F} \\
 &= \sum \alpha_{i} (\id_{F} \otimes f_{i}) \, .
\end{align*}
So for every $m \in M$, we have
\begin{align*}
 0 
 &= \left( \sum \alpha_{i} (\id_{F} \otimes f_{i}) \right)(1 \otimes m) \\
 &= \sum \alpha_{i} \left( 1 \otimes f_{i}(m) \right) \\
 &= \sum \alpha_{i} \otimes f_{i}(m) \, .
\end{align*}
This equality happens inside $N^{F}$ and we have
\begin{align*}
 N^{F} 
 = F \otimes_{k} N 
 &\cong \left( \bigoplus_{i \in I} k \right) \otimes_{k} N \\
 &\cong \bigoplus_{i \in I} \left( k \otimes_{k} N \right)
 \cong \bigoplus_{i \in I} N
\end{align*}
where the isomorphism from the beginning to the end is given by $$(n_{i})_{i \in I} \mapsto \sum_{i \in I} \alpha_{i} \otimes n_{i} \, .$$
The sum makes sense because only finitely many $n_{i}$'s are nonzero. As a consequence $N^{F}$ has the decomposition \ds{N^{F} = \bigoplus_{i \in I} \left( \alpha_{i} \otimes_{k} N \right)} as a $k$-vector space. Hence the above sum being zero implies that $f_{i}(m) = 0$ for every $i \in I$. Here $m$ was arbitrary, so $f_{i}$'s are zero and hence $\sum \alpha_{i} \otimes f_{i} = 0$.

Now let $g: M^{F} \rarr N^{F}$ be an $A^{F}$-linear map. Since \ds{N^{F} = \bigoplus_{i \in I} \left( \alpha_{i} \otimes_{k} N \right)}, for $m \in M$ we have
\begin{align*}
 g(1 \otimes m) = \sum \alpha_{i} \otimes g_{i}(m)
\end{align*}
where each $g_{i}$ is a function from $M$ to $N$. This representation of $g(1 \otimes m)$ is unique. Therefore via
\begin{align*}
 \sum \alpha_{i} \otimes g_{i}(m + m') 
 &= g(1 \otimes (m + m')) \\
 &= g(1 \otimes m + 1 \otimes m') \\
 &= g(1 \otimes m) + g(1 \otimes m') \\
 &= \sum \alpha_{i} \otimes g_{i}(m) + \sum \alpha_{i} \otimes g_{i}(m') \\
 &= \sum \alpha_{i} \otimes (g_{i}(m) + g_{i}(m')) \, ,
\end{align*}
we conclude that $g_{i}$'s are additive maps. For $r \in A$, we have
\begin{align*}
 g(1 \otimes rm) 
 &= g((1 \otimes r) \cdot (1 \otimes m)) \\
 &=(1 \otimes r) \cdot g(1 \otimes m)
\end{align*}
since $1 \otimes r \in F \otimes_{k} A = A^{F}$ and $g$ is $A^{F}$-linear. From here we get
\begin{align*}
 \sum \alpha_{i} \otimes g_{i}(rm) 
 &= (1 \otimes r) \cdot \left(\sum \alpha_{i} \otimes g_{i}(m) \right) \\
 &= \sum \alpha_{i} \otimes rg_{i}(m) \, ,
\end{align*}
thus $g_{i}$'s are actually $A$-linear. Let $\eta$ be the following composition of $k$-linear maps
\begin{align*}
 \xymatrix{
 M \ar[r]& M^{F} \ar[r]^{g\,\,\,\,\,\,\,\,\,\,\,\,\,\,\,\,\,\,}  & N^{F} \cong \displaystyle{\bigoplus_{i \in I} N}\,
}
\end{align*}
where the first arrow is the natural map $m \mapsto 1 \otimes m$. By our description, $g_{i}$'s are precisely the coordinate maps of $\eta$. Therefore if $\dim_{k} M < \infty$, $\eta$ has finite $k$-rank and hence only finitely many $g_{i}$'s can be nonzero. Thus \ds{f = \sum \alpha_{i} \otimes g_{i}} is a legitimate element of $\left[ \Hom_{A}(M,N)\right]^{F}$. Now since
\begin{align*}
 \theta(f)(1 \otimes m) = \sum \alpha_{i} \otimes g_{i}(m) = g(1 \otimes m)
\end{align*}
for all $m$ and $\theta(f)$, $g$ are $F$-linear maps, we have $\theta(f) = g$. So $\theta$ is surjective.
\end{proof}

\begin{cor} \label{isoAlg}
 If $M$ is a left $A$-module with $\dim_{k} M < \infty$ and $k \subseteq F$ is a field extension, then $\left[ \End_{A}(M) \right]^{F} \cong \End_{A^{F}}(M^{F})$ as $F$-algebras.
\end{cor}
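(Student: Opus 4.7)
The plan is to apply Lemma \ref{iso} with $N = M$, which (since $\dim_{k} M < \infty$) already supplies an $F$-linear isomorphism $\theta \colon [\End_{A}(M)]^{F} \rarr \End_{A^{F}}(M^{F})$. All that remains is to verify that this $\theta$ is a map of $F$-algebras, i.e., that it preserves composition and the unit with respect to the natural $F$-algebra structures on both sides.

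First I would record the two algebra structures. On the domain, $F \otimes_{k} \End_{A}(M)$ is the tensor product of $k$-algebras, with multiplication $(\alpha \otimes f)(\beta \otimes g) = \alpha\beta \otimes (f \circ g)$ and unit $1_{F} \otimes \id_{M}$. On the codomain, $\End_{A^{F}}(M^{F})$ is an $F$-algebra under composition, with $F$ acting through the inclusion $F \hookrightarrow A^{F}$, $\alpha \mapsto \alpha \otimes 1_{A}$.

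Next I would unwind the definition of $\theta$ in order to get an explicit formula on simple tensors. Tracing through the adjunction used to define $\theta$, together with the identification of the left adjoint as $F \otimes_{k} -$ established in the first proposition, one sees that $\theta(\alpha \otimes f) = \alpha \cdot (\id_{F} \otimes f)$, so that $\theta(\alpha \otimes f)(\beta \otimes m) = \alpha\beta \otimes f(m)$.

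With this formula in hand, the multiplicativity and unitality checks reduce to one-line calculations on simple tensors: both $\theta(\alpha \otimes f) \circ \theta(\beta \otimes g)$ and $\theta\bigl((\alpha \otimes f)(\beta \otimes g)\bigr)$ send $\gamma \otimes m$ to $\alpha\beta\gamma \otimes f(g(m))$, and $\theta(1_{F} \otimes \id_{M})$ acts as the identity on simple tensors. Extending by $F$-linearity on the left factors and by $k$-linearity in $m$ then yields the two algebra identities on all of $[\End_{A}(M)]^{F}$ and $M^{F}$. I do not anticipate any genuine obstacle here: Lemma \ref{iso} has done the bulk of the work, and what is left is merely bookkeeping to confirm that $\theta$ respects the multiplicative structure.
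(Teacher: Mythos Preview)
Your proposal is correct and follows exactly the paper's approach: take $N = M$ in Lemma \ref{iso} and check that $\theta$ preserves multiplication. You have simply written out in detail the verification that the paper leaves as an observation.
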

\begin{proof}
 Take $N = M$ in Lemma \ref{iso} and observe that the map $\theta$ preserves multiplication.
\end{proof}

\begin{defn}
 A simple left $A$-module $S$ is called \textbf{absolutely simple} if for every field extension $k \subseteq F$, the left $A^{F}$-module $S^{F}$ is simple.
\end{defn}

\begin{ex}
Every $1$-dimensional left $A$-module is absolutely simple due to dimension reasons, due to Remark \ref{zero}.
\end{ex}

\begin{prop} \label{surj}
 Let $B$ be another $k$-algebra and $\vphi \colon A \twoheadrightarrow B$ a surjective $k$-algebra homomorphism. If $S$ is an absolutely simple left $B$-module, then it is also absolutely simple as a left $A$-module (via $\vphi$).
\end{prop}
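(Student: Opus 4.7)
The plan is to reduce everything to the following elementary principle: if $\psi \colon R \twoheadrightarrow T$ is a surjective ring homomorphism and $X$ is a simple left $T$-module, then $X$ is simple as a left $R$-module (via $\psi$), because $\psi$ being surjective forces the $R$-submodules of $X$ to coincide with its $T$-submodules.

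First I would set up the induced map. Given a field extension $k \subseteq F$, applying $(-)^{F} = F \otimes_{k} -$ to $\vphi$ yields a $k$-algebra (in fact $F$-algebra) homomorphism
\begin{align*}
 \vphi^{F} = \id_{F} \otimes \vphi \colon A^{F} \rarr B^{F} \, .
\end{align*}
Since $F$ is a free (hence flat) $k$-module, tensoring preserves surjections, so $\vphi^{F}$ is again surjective.

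Next I would check that the $A^{F}$-action on $S^{F}$ (viewing $S$ as an $A$-module via $\vphi$ and then inducing) factors through $\vphi^{F}$, i.e.\ it agrees with the $A^{F}$-action obtained by restricting the natural $B^{F}$-module structure on $S^{F}$ along $\vphi^{F}$. This is a direct calculation on elementary tensors:
\begin{align*}
 (\alpha \otimes a) \cdot (\beta \otimes s) = \alpha\beta \otimes a s = \alpha\beta \otimes \vphi(a) s = \vphi^{F}(\alpha \otimes a) \cdot (\beta \otimes s) \, .
\end{align*}

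Finally I would invoke the elementary principle. By hypothesis, $S^{F}$ is simple as a $B^{F}$-module. Since $\vphi^{F} \colon A^{F} \twoheadrightarrow B^{F}$ is surjective and the $A^{F}$-action factors through it, every $A^{F}$-submodule of $S^{F}$ is automatically a $B^{F}$-submodule, so $S^{F}$ is simple as an $A^{F}$-module. As $F$ was an arbitrary extension of $k$, $S$ is absolutely simple as an $A$-module. There is no real obstacle here; the only thing to be mildly careful about is distinguishing the two a priori different $A^{F}$-actions on $S^{F}$ in the middle step and verifying that they coincide.
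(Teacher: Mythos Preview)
Your proof is correct and follows essentially the same approach as the paper's: apply $(-)^{F}$ to obtain a surjection $\vphi^{F}\colon A^{F}\twoheadrightarrow B^{F}$, then use that surjectivity forces the $A^{F}$- and $B^{F}$-submodules of $S^{F}$ to coincide. You are slightly more explicit than the paper in verifying that the two a priori different $A^{F}$-actions on $S^{F}$ agree (the paper asserts this in one line), and note that right exactness of the tensor product already gives surjectivity of $\vphi^{F}$ without appealing to flatness.
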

\begin{proof}
 As the functor $(-)^{F}$ is exact, the $F$-algebra homomorphism
 \[
  \vphi^{F} \colon A^{F} \to B^{F}
 \]
 is also surjective. The map $\vphi^{F}$ is nothing but the one that gives the $A^{F}$-module structure on $S^{F}$, so it being a surjection yields that the $B^{F}$-submodules and $A^{F}$-submodules of $S^{F}$ coincide. Since $S^{F}$ is simple as a left $B^{F}$-module by our hypothesis, it must also be simple as a left $A^{F}$-module. 
\end{proof}

\begin{thm}[{\cite[Theorem 13.23]{isaacs-algebra} and its proof}] \label{simple-artinian}
 For every division ring $D$ and $n \in \zz_{\geq 1}$, the matrix ring $\mathbb{M}_{n}(D)$ has (up to isomorphism) a unique simple left module $S$ which can be described as the column space $S \coloneqq D^{n}$ acted on via matrix multiplication. Moreover
 \(
  \End_{\mathbb{M}_{n}(D)}(S) \cong D^{\opp}
 \).
\end{thm}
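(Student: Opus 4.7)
The plan is to prove the three assertions of the theorem in sequence: simplicity of $S = D^{n}$, uniqueness up to isomorphism, and the endomorphism ring computation. Throughout I will write $e_{1}, \dots, e_{n}$ for the standard basis of $S$ and $E_{ij}(d) \in \mathbb{M}_{n}(D)$ for the matrix with $d \in D$ at position $(i,j)$ and zeros elsewhere.

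For simplicity, given any nonzero $v \in S$ with some entry $v_{k} \neq 0$, one checks that $E_{ik}(v_{k}^{-1}) \cdot v = e_{i}$, so the cyclic submodule generated by $v$ contains every $e_{i}$ and hence equals $S$. To establish uniqueness, I would decompose the regular module into columns: letting $C_{j} \subset \mathbb{M}_{n}(D)$ be the left ideal of matrices supported in column $j$, the ``extract column $j$'' map $C_{j} \to S$ is an isomorphism of left $\mathbb{M}_{n}(D)$-modules, so $\mathbb{M}_{n}(D) \cong S^{n}$ as a left module over itself. If $T$ is any simple left $\mathbb{M}_{n}(D)$-module, pick $0 \neq t \in T$; then $m \mapsto m \cdot t$ is a surjection $\mathbb{M}_{n}(D) \twoheadrightarrow T$ that must be nonzero on some summand $C_{j}$, and the resulting nonzero homomorphism $S \cong C_{j} \to T$ between two simple modules is an isomorphism.

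The endomorphism ring computation is the step where I expect the only real bookkeeping, because this is where the opposite ring enters. For each $d \in D$, right multiplication by $d$ (entrywise) defines a map $\phi_{d} \colon S \to S$; associativity of matrix multiplication makes $\phi_{d}$ an $\mathbb{M}_{n}(D)$-linear endomorphism, and $\phi_{d} \circ \phi_{d'} = \phi_{d'd}$, so $d \mapsto \phi_{d}$ is a ring homomorphism from $D^{\opp}$ into $\End_{\mathbb{M}_{n}(D)}(S)$. Injectivity is immediate, since $\phi_{d}(e_{1}) = e_{1}d$ already recovers $d$. For surjectivity, observe that $E_{11}(1) e_{1} = e_{1}$ forces $\phi(e_{1}) = E_{11}(1) \phi(e_{1})$ to lie in the line $e_{1} D$, so $\phi(e_{1}) = e_{1} d$ for a unique $d \in D$; and since $E_{i1}(d') e_{1} = e_{i} d'$ shows that $e_{1}$ generates $S$ as an $\mathbb{M}_{n}(D)$-module, a short direct calculation yields $\phi(e_{i} d') = E_{i1}(d') \phi(e_{1}) = e_{i}(d' d) = \phi_{d}(e_{i} d')$, so $\phi = \phi_{d}$ everywhere on $S$. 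The main obstacle is keeping track of which side the $D$-scalars sit on at each step---it is precisely the identity $\phi_{d} \circ \phi_{d'} = \phi_{d'd}$ rather than $\phi_{dd'}$ that produces $D^{\opp}$ in place of $D$.
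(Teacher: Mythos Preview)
The paper does not supply its own proof of this theorem; it is stated with a citation to Isaacs and used as a black box. Your argument is correct and is the standard direct proof: simplicity via elementary matrices acting on a nonzero vector, uniqueness via the column decomposition ${}_{\mathbb{M}_{n}(D)}\mathbb{M}_{n}(D) \cong S^{n}$ together with Schur's lemma, and the endomorphism ring via right scalar multiplication. Your handling of the $D^{\opp}$ bookkeeping is accurate, and the surjectivity step is complete since the elements $e_{i}d'$ additively generate $S$ and both $\phi$ and $\phi_{d}$ are additive.
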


\begin{cor} \label{matrix-abs}
 For every $n \in \zz_{\geq 1}$, the (up to isomorphism) unique simple left module $S$ of the $k$-algebra $\mathbb{M}_{n}(k)$, which can be described as the column space $S \coloneqq k^{n}$ acted on via matrix multiplication, is absolutely simple.
\end{cor}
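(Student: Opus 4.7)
The plan is to reduce this to a direct application of Theorem \ref{simple-artinian} with $D = F$, by verifying that extending scalars from $k$ to $F$ turns the pair $(\mathbb{M}_n(k), k^n)$ into the pair $(\mathbb{M}_n(F), F^n)$ in a structure-preserving way.

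First I would write down the two canonical isomorphisms: the $F$-algebra isomorphism $\mathbb{M}_n(k)^F = F \otimes_k \mathbb{M}_n(k) \cong \mathbb{M}_n(F)$ (given on simple tensors by $\alpha \otimes [a_{ij}] \mapsto [\alpha a_{ij}]$), and the $F$-vector space isomorphism $S^F = F \otimes_k k^n \cong F^n$ (given on simple tensors by $\alpha \otimes (c_1,\dots,c_n) \mapsto (\alpha c_1,\dots,\alpha c_n)$). Both are standard: the first because $\mathbb{M}_n(k) \cong k^{n^2}$ as a $k$-vector space and the tensor product commutes with finite direct sums while sending the matrix multiplication to itself, and the second analogously.

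Next I would check that, under these two identifications, the $\mathbb{M}_n(k)^F$-action on $S^F$ given by the functor $(-)^F$ corresponds precisely to the standard action of $\mathbb{M}_n(F)$ on $F^n$ by matrix multiplication. This is a routine verification on simple tensors: $(\alpha \otimes A) \cdot (\beta \otimes v) = \alpha\beta \otimes Av$ is sent to $(\alpha A)(\beta v) = (\alpha\beta) Av$ in $F^n$.

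Having done this identification, Theorem \ref{simple-artinian} applied with $D = F$ tells us that $F^n$ is a simple left $\mathbb{M}_n(F)$-module, hence $S^F$ is a simple left $\mathbb{M}_n(k)^F$-module. Since the field extension $k \subseteq F$ was arbitrary, $S$ is absolutely simple. The only potential obstacle is bookkeeping in the identification of the $F$-algebra structures and module structures above; but since both $\mathbb{M}_n(k)$ and $k^n$ are finite-dimensional over $k$, everything reduces to checking the multiplication and action on generators, so no subtle point arises.
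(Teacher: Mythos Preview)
Your proposal is correct and follows essentially the same approach as the paper: identify $[\mathbb{M}_n(k)]^F$ with $\mathbb{M}_n(F)$ and $S^F$ with $F^n$ compatibly with the action, then invoke Theorem \ref{simple-artinian} with $D=F$. The only cosmetic difference is that the paper obtains the $F$-algebra isomorphism $[\mathbb{M}_n(k)]^F \cong \mathbb{M}_n(F)$ by citing Corollary \ref{isoAlg} (applied with $A=k$, $M=k^n$), whereas you write it down explicitly on simple tensors; both are equally valid ways to record the same standard fact.
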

\begin{proof}
 Let us realize $S$ as the column space $k^{n}$ as we are told in Theorem \ref{simple-artinian}. Then given a field extension $k \subseteq F$, we have $S^{F} \cong F^{n}$ and $[\mathbb{M}_{n}(k)]^{F} \cong \bb{M}_{n}(F)$ (Corollary \ref{isoAlg}) where the action is again matrix multiplication on the column space. Thus $S^{F}$ is simple as a left $[\mathbb{M}_{n}(k)]^{F}$-module again by Theorem \ref{simple-artinian}.
\end{proof}

\begin{thm} \label{abs-simple-defn}
 For a simple left $A$-module $S$ with $\dim_{k} S < \infty$, writing 
\[
 \vphi \colon A \to E \coloneqq \End_{k}(S)
\] 
 for the $k$-algebra homomorphism that defines the $A$-module structure of $S$, the following are equivalent:
\begin{enumerate}
 \item $S$ is absolutely simple.  
 \item For every field extension $k \subseteq F$ with $F$ algebraically closed, the left $A^{F}$-module $S^{F}$ is simple.
 \item $\dim_{k} \End_{A}(S) = 1$.
 \item $\vphi$ is surjective.
\end{enumerate}
\end{thm}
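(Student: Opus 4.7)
The plan is to prove the cycle (1) $\Rightarrow$ (2) $\Rightarrow$ (3) $\Rightarrow$ (4) $\Rightarrow$ (1). The first implication is immediate, since (2) is a special case of (1).

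For (2) $\Rightarrow$ (3), set $D \coloneqq \End_A(S)$ and let $F$ be an algebraic closure of $k$. Schur's lemma makes $D$ a division $k$-algebra, and $\dim_k S < \infty$ forces $\dim_k D \leq (\dim_k S)^2 < \infty$. Corollary \ref{isoAlg} then gives an $F$-algebra isomorphism $D^F \cong \End_{A^F}(S^F)$, and by hypothesis the right-hand side is a division $F$-algebra (Schur again). A finite-dimensional division algebra over an algebraically closed field equals the field itself, so $\dim_F D^F = 1$, and Remark \ref{zero} yields $\dim_k D = 1$.

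For (3) $\Rightarrow$ (4), the hypothesis says $\End_A(S) = k \cdot \id_S$. Here I would invoke Jacobson's density theorem applied to the simple $A$-module $S$ with commutant $k$: the image $\varphi(A) \subseteq \End_k(S)$ acts densely, in the sense that any $k$-linear map can be realized by some element of $\varphi(A)$ on any prescribed finite $k$-linearly independent tuple of vectors. Feeding in a $k$-basis of $S$ (finite by hypothesis), density collapses to surjectivity, so $\varphi(A) = \End_k(S) = E$.

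For (4) $\Rightarrow$ (1), set $n = \dim_k S$, so $E \cong \mathbb{M}_n(k)$ and $S$ is (up to isomorphism) the unique simple left $E$-module by Theorem \ref{simple-artinian}. Corollary \ref{matrix-abs} says $S$ is absolutely simple as an $E$-module, and Proposition \ref{surj} applied to the surjection $\varphi$ transports this back to $A$. The main obstacle in the whole chain is (3) $\Rightarrow$ (4), which requires Jacobson's density theorem as the one substantial external input; the step (2) $\Rightarrow$ (3) also leans on the standard fact that finite-dimensional division algebras over algebraically closed fields are trivial, but everything else is assembled from results already in the excerpt.
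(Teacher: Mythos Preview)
Your proof is correct and follows essentially the same cycle (1) $\Rightarrow$ (2) $\Rightarrow$ (3) $\Rightarrow$ (4) $\Rightarrow$ (1) as the paper, with the same ingredients at each step: Corollary \ref{isoAlg} plus Schur over an algebraically closed field for (2) $\Rightarrow$ (3), Jacobson density for (3) $\Rightarrow$ (4), and Corollary \ref{matrix-abs} together with Proposition \ref{surj} for (4) $\Rightarrow$ (1). The only cosmetic difference is that the paper cites a version of Schur's lemma that directly gives $\dim_F \End_{A^F}(S^F) = 1$, whereas you unpack this as ``division algebra $+$ finite-dimensional over algebraically closed $\Rightarrow$ trivial''; the content is identical.
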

\begin{proof}
 (1) $\Rarr$ (2) is trivial. For (2) $\Rarr$ (3), fix an algebraically closed field $F$ containing $k$, for instance the algebraic closure $\ov{k}$. Now
\begin{align*}
 \dim_{k} \End_{A}(S) 
 &= \dim_{F} \left[
 \End_{A}(S)
 \right]^{F}
 \\
 &= \dim_{F} \End_{A^{F}}(S^{F}) \quad \text{(Corollary \ref{isoAlg})}
 \\
 &= 1 \quad{\text{(Schur's lemma \cite[Theorem 2.1.1]{webb-rep-book})}} \, .
\end{align*}
For (3) $\Rarr$ (4) we want to show that the subset $\vphi(A) \subseteq E$ is \textbf{not} proper. The $k$-algebra $D \coloneqq \End_{A}(S)$ (which is a division algebra by Schur's lemma) essentially by definition is the centralizer $D = \czer{E}{\vphi(A)}$. As 
\[
 \id_{M} \in \czer{E}{E} \subseteq \czer{E}{\vphi(A)} = D \, ,
\]
the assumption $\dim_{k}(D) = 1$ forces $D = \czer{E}{E}$. Pick a finite $k$-basis $X$ of $S$, which is necessarily also a $D$-basis of $S$ as $\dim_{k} D = 1$. Now given $\alpha \in \czer{E}{D}$, by the Jacobson density theorem \cite[Theorem 13.14]{isaacs-algebra} there exists $a \in A$ such that
\begin{align*}
 \vphi(a) |_{X} = \alpha|_{X}
\end{align*}
and hence $\vphi(a) = \alpha$. This shows that $\czer{E}{D} \subseteq \vphi(A)$, hence
\[
 E \subseteq \czer{E}{\czer{E}{E}} = \czer{E}{D} \subseteq \vphi(A)
\]
as desired.

 For (4) $\Rarr$ (1), note that picking any $k$-basis $e_{1}, \dots, e_{n}$ for $S$ establishes isomorphisms $S \cong k^{n}$ and $E \cong \bb{M}_{n}(k)$ where the action is matrix multiplication, so $S$ is absolutely simple as a left $E$-module by Corollary \ref{matrix-abs}. Thus $S$ is absolutely simple as a left $A$-module as well by Proposition \ref{surj}.
\end{proof}

\begin{conv}
From now on, we assume that $A$ is a \textbf{finite-dimensional} $k$-algebra. This in particular ensures that every simple left $A$-module is finite-dimensional over $k$ since it is a quotient of the regular left $A$-module ${_{A}A}$.
\end{conv}


\begin{defn}
 We say (the finite-dimensional $k$-algebra) $A$ is \textbf{split} if every simple left $A$-module is absolutely simple. We call an extension field $F$ of $k$ a \textbf{splitting field} for $A$ if $A^{F}$ is a split $F$-algebra. 
\end{defn}


Observe that by (2) in Theorem \ref{abs-simple-defn}, if $k$ is algebraically closed then $A$ is split. More generally, if $F$ is an algebraically closed field containing $k$ then $F$ is a splitting field for $A$. So splitting fields always exist as we can go to the algebraic closure $\overline{k}$. But $\overline{k}$ is really an overkill and splitting fields can always be found in finite extensions of $k$, which is our next aim to show.

\begin{prop} \label{split-radical}
 Let $k \subseteq F$ be a field extension and use it to identify $(\Rad A)^{F}$ as a left $A^{F}$-submodule of $A^{F}$. If $A$ is split, then $(\Rad A)^{F} = \Rad A^{F}$.
\end{prop}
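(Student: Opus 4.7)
The plan is to establish the equality $(\Rad A)^{F} = \Rad A^{F}$ by proving the two containments separately. Only the containment $\Rad A^{F} \subseteq (\Rad A)^{F}$ will actually use that $A$ is split; the opposite containment holds in general (given our standing assumption that $A$ is finite-dimensional).

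For the easy direction, since $A$ is finite-dimensional over $k$, the Jacobson radical $\Rad A$ is nilpotent: say $(\Rad A)^{n} = 0$. Because multiplication on $A^{F}$ is obtained from that of $A$ by tensoring with $F$, and because $(-)^{F}$ is exact, the two-sided ideal $(\Rad A)^{F}$ of $A^{F}$ also satisfies $((\Rad A)^{F})^{n} = 0$. A nilpotent two-sided ideal is always contained in the Jacobson radical, so $(\Rad A)^{F} \subseteq \Rad A^{F}$.

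For the reverse direction, the strategy is to show that $A^{F}/(\Rad A)^{F}$ is semisimple as an $F$-algebra, after which $\Rad A^{F} \subseteq (\Rad A)^{F}$ follows because the quotient map $A^{F} \twoheadrightarrow A^{F}/(\Rad A)^{F}$ must send $\Rad A^{F}$ into the (zero) radical of the semisimple quotient. Let $S_{1}, \dots, S_{r}$ be representatives of the isomorphism classes of simple left $A$-modules and set $D_{i} \coloneqq \End_{A}(S_{i})^{\opp}$. Wedderburn--Artin applied to the semisimple algebra $A/\Rad A$ gives
\[
 A/\Rad A \;\cong\; \prod_{i=1}^{r} \mathbb{M}_{n_{i}}(D_{i}).
\]
The split hypothesis intervenes now: each $S_{i}$ is absolutely simple and finite-dimensional over $k$, so by Theorem~\ref{abs-simple-defn}~(3) we have $\dim_{k} \End_{A}(S_{i}) = 1$ and hence $D_{i} = k$. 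Thus $A/\Rad A \cong \prod_{i} \mathbb{M}_{n_{i}}(k)$. Applying the exact functor $(-)^{F}$ to the short exact sequence $0 \to \Rad A \to A \to A/\Rad A \to 0$ yields $A^{F}/(\Rad A)^{F} \cong (A/\Rad A)^{F}$, and since $(-)^{F}$ distributes over finite products and satisfies $\mathbb{M}_{n}(k)^{F} \cong \mathbb{M}_{n}(F)$ (a special case of Corollary~\ref{isoAlg}), we conclude
\[
 A^{F}/(\Rad A)^{F} \;\cong\; \prod_{i=1}^{r} \mathbb{M}_{n_{i}}(F),
\]
which is $F$-semisimple, as required.

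The main point of substance is the reduction of the Wedderburn factors from $\mathbb{M}_{n_{i}}(D_{i})$ to $\mathbb{M}_{n_{i}}(k)$ via the split condition. Without it, a base change $D_{i}^{F}$ of a noncommutative or inseparable division $k$-algebra can easily fail to be semisimple; the quotient $A^{F}/(\Rad A)^{F}$ could then have nonzero radical and the containment $\Rad A^{F} \subseteq (\Rad A)^{F}$ could be strict. Everything else in the argument is a formal consequence of the exactness of $(-)^{F}$ and of standard facts about the Jacobson radical of a finite-dimensional algebra.
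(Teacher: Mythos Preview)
Your proof is correct and follows the same two-inclusion strategy as the paper: one containment from nilpotency of $(\Rad A)^{F}$, the other from semisimplicity of $A^{F}/(\Rad A)^{F}\cong (A/\Rad A)^{F}$. The only cosmetic difference is that you establish the semisimplicity of $(A/\Rad A)^{F}$ ring-theoretically via Wedderburn--Artin (reducing $D_{i}$ to $k$ and tensoring up to $\mathbb{M}_{n_{i}}(F)$), whereas the paper argues module-theoretically, observing that $(A/\Rad A)^{F}$ decomposes as a sum of the $S^{F}$, each of which is simple by absolute simplicity; these are two phrasings of the same fact.
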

\begin{proof}
The (left or right) $A^{F}$-module $A^{F} / (\Rad A)^{F} \cong (A / \Rad A)^{F}$ is semisimple since $A$ is split. Hence $\Rad A^{F} \subseteq (\Rad A)^{F}$ by \cite[Lemma 6.3.1, part (2)]{webb-rep-book}. On the other hand, $(\Rad A)^{F} = F \otimes_{k} \Rad A$ is a nilpotent ideal of $A^{F} = F \otimes_{k} A$ so we get the reverse inclusion by \cite[Proposition 6.3.2, part (3c)]{webb-rep-book}.
\end{proof}

We can do this for modules too. Note that given a left $A$-module $U$ and a submodule $V \subseteq U$, we can identify $V^{F}$ as an $A^{F}$-submodule of $U^{F}$.

\begin{cor}
 Suppose $A$ is split and let $U$ be a finite-dimensional left $A$-module. Then identifying $(\Rad U)^{F}$ as a submodule of $U^{F}$, we have $(\Rad U)^{F} = \Rad U^{F}$.
\end{cor}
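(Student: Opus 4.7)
The plan is to reduce the statement to Proposition \ref{split-radical} by invoking the standard identification of the radical of a finite-dimensional module with the product of the Jacobson radical of the algebra and the module. Explicitly, for any finite-dimensional left module $V$ over a finite-dimensional algebra $B$ one has $\Rad V = (\Rad B)\cdot V$; this is a routine consequence of the definitions together with the fact that $B/\Rad B$ is semisimple, and is exactly the kind of fact invoked from Webb's book already in Proposition \ref{split-radical}.

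First I would apply this fact to $U$ over $A$ to write $\Rad U = (\Rad A)\cdot U$ as a $k$-subspace of $U$, and then apply the same fact to $U^{F}$ over $A^{F}$ — which is legitimate because $\dim_{F} A^{F} = \dim_{k} A < \infty$ and $\dim_{F} U^{F} = \dim_{k} U < \infty$ by Remark \ref{zero} — to get $\Rad U^{F} = (\Rad A^{F})\cdot U^{F}$. Next I would use the standing hypothesis that $A$ is split to invoke Proposition \ref{split-radical}, which replaces $\Rad A^{F}$ by $(\Rad A)^{F}$, yielding
\[
 \Rad U^{F} = (\Rad A)^{F} \cdot U^{F} \, .
\]

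The remaining step is the essentially tautological identification of $(\Rad A)^{F}\cdot U^{F}$ with $\bigl( (\Rad A)\cdot U \bigr)^{F} = (\Rad U)^{F}$. A typical element of the product ideal is a sum of terms of the form $(\alpha\otimes r)(\beta\otimes u) = \alpha\beta \otimes ru$ with $\alpha,\beta \in F$, $r\in\Rad A$, $u\in U$, and these lie in $F\otimes_{k} (\Rad A\cdot U)$; conversely every element $\gamma\otimes (ru)$ with $r\in\Rad A$, $u\in U$ can be written as $(\gamma\otimes r)(1\otimes u)$ and so lies in $(\Rad A)^{F}\cdot U^{F}$.

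I do not expect any real obstacle here: the only substantive ingredient beyond Proposition \ref{split-radical} is the identity $\Rad V = (\Rad B)\cdot V$ for finite-dimensional modules, and the rest is a one-line bilinearity check. If anything, the only care needed is the remark that $A^{F}$ and $U^{F}$ really are finite-dimensional over $F$, which is where the hypothesis $\dim_{k} U < \infty$ enters and without which $\Rad U^{F} = (\Rad A^{F})\cdot U^{F}$ would not be available from the general theory cited.
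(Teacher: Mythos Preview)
Your proposal is correct and follows essentially the same route as the paper: both arguments use $\Rad V = (\Rad B)\cdot V$ for finite-dimensional modules on either side, invoke Proposition \ref{split-radical} to swap $\Rad A^{F}$ for $(\Rad A)^{F}$, and finish with the bilinearity identification $(F\otimes_{k}\Rad A)\cdot(F\otimes_{k}U) = F\otimes_{k}(\Rad A\cdot U)$. Your write-up is slightly more explicit about why the finite-dimensionality hypotheses are needed, but the underlying argument is the same.
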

\begin{proof}
 We have
\begin{align*}
 \Rad U^{F} 
 &= (\Rad A^{F}) \cdot U^{F} \quad \text{\cite[Proposition 6.3.4, part (1c)]{webb-rep-book}}\\
 &= (\Rad A)^{F} \cdot U^{F} \quad \text{(Proposition \ref{split-radical})}\\
 &= (F \otimes_{k} \Rad A) \cdot (F \otimes_{k} U) \\
 &= F \otimes_{k} (\Rad A \cdot U) \\
 &= F \otimes_{k} \Rad U \quad \text{\cite[Proposition 6.3.4, part (1c)]{webb-rep-book}}\\
 &= (\Rad U)^{F} \, .
\end{align*}
\end{proof}

\begin{defn}
 Let $k \subseteq F$ be a field extension. We say $A^{F}$-module $V$ \textbf{can be written in} $k$ if there exists an $A$-module $U$ such that $U^{F} \cong V$.
\end{defn}

\begin{lem} \label{write-in}
 Let $k \subseteq F$ be a field extension and $V$ be an $A^{F}$-module with $\dim_{F} V < \infty$. Then the following are equivalent:
\begin{birki}
 \item $V$ can be written in $k$.
 \item  $V$ has an $F$-basis $v_{1}, \dots, v_{n}$ such that when $\End_{F}(V)$ is identified with $\mm_{n}(F)$ using this basis, the image of the composite map $A \rarr A^{F} \rarr \End_{F}(V)$ lies in $\mm_{n}(k)$. 
\end{birki}
\end{lem}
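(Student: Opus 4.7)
The plan is to prove the two directions by translating the condition "$V$ can be written in $k$" into the existence of an $A$-stable $k$-form inside $V$, i.e.\ a $k$-subspace $U \subseteq V$ preserved by the $A$-action whose $F$-span is all of $V$.

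For (1) $\Rightarrow$ (2), I start with an $A$-module $U$ and an $A^F$-linear isomorphism $U^F \cong V$. By Remark \ref{zero} we have $\dim_k U = \dim_F V = n$, so a $k$-basis $u_1,\dots,u_n$ of $U$ yields an $F$-basis $v_i \coloneqq 1 \otimes u_i$ of $U^F$, which transports to the desired basis of $V$. For $a \in A$, write $a u_i = \sum_j c_{ji}\, u_j$ with $c_{ji} \in k$; then $(1\otimes a)\cdot v_i = 1 \otimes a u_i = \sum_j c_{ji}\, v_j$, so the matrix of $a$ in the basis $\{v_i\}$ is $(c_{ji}) \in \mathbb{M}_n(k)$.

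For (2) $\Rightarrow$ (1), the key move is to define
\[
 U \;\coloneqq\; \sum_{i=1}^{n} k\, v_i \;\subseteq\; V,
\]
a $k$-subspace of dimension $n$. The hypothesis that the image of $A \to \End_F(V)$ lies in $\mathbb{M}_n(k)$ says exactly that for every $a \in A$ one has $a\cdot v_i \in U$, so $U$ is closed under the action of $A$ coming from $A \to A^F \to \End_F(V)$. Hence $U$ is a left $A$-module in its own right. I then define the $F$-linear map
\[
 \Phi \colon U^F = F \otimes_k U \longrightarrow V, \qquad \alpha \otimes u \mapsto \alpha\cdot u,
\]
which sends the $F$-basis $\{1 \otimes v_i\}$ of $U^F$ to the $F$-basis $\{v_i\}$ of $V$, hence is an $F$-linear isomorphism. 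It remains to check $\Phi$ is $A^F$-linear: by $F$-linearity it suffices to check linearity for elements of the form $1 \otimes a$, and that reduces to the fact that the $A$-action on $U$ agrees with the $A$-action on $V$ by construction.

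There is no real obstacle; the only subtle point is the conceptual one that "the matrices representing $A$ have entries in $k$" and "there is an $A$-stable $k$-form of $V$" are literally the same condition expressed basis-freely versus basis-dependently, and once this is recognized both directions become a routine bookkeeping exercise using Remark \ref{zero} to match dimensions.
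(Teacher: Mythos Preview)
Your proof is correct and follows essentially the same approach as the paper's. The only cosmetic difference is that for (1) $\Rightarrow$ (2) the paper packages the computation in the language of the adjunction unit and a commutative square of $k$-algebras, whereas you write out the matrix entries $c_{ji}$ directly; for (2) $\Rightarrow$ (1) both arguments take $U$ to be the $k$-span of the $v_i$, observe it is $A$-stable, and verify that the canonical map $U^{F} \to V$ is an $A^{F}$-isomorphism.
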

\begin{proof}
 Assume $V \cong U^{F}$ for some $U$. Since $\dim_{k} U = \dim_{F} V < \infty$, we can pick a $k$-basis $u_{1}, \dots u_{n}$ for $U$ and get a $k$-algebra isomorphism $\End_{k}(U) \cong \mm_{n}(k)$ out of it. Since $(-)^{F}: \alg{k} \rarr \alg{F}$ is the left adjoint of the restriction functor, we have a commutative diagram
\begin{align*}
 \xymatrix{
 A \ar[r] \ar[d] & \End_{k}(U) \ar[d] \\
 A^{F} \ar[r] & \left( \End_{k}(U) \right)^{F} \cong \End_{F}(V)
 }
\end{align*}
of $k$-algebras using the unit of the adjunction. Note that $1 \otimes u_{i}$'s are an $F$-basis for $V$ which gives an isomorphism $\End_{F}(V) \cong \mm_{n}(F)$ and the map which completes

\begin{align*}
 \xymatrix{
 \End_{k}(U) \ar[r] \ar[d] & \mm_{n}(k) \\
 \End_{F}(V) \ar[r] & \mm_{n}(F)
 }
\end{align*}
into a commutative square is just the inclusion $\mm_{n}(k) \emb \mm_{n}(F)$.
Thus the image of the composite map $A \rarr A^{F} \rarr \End_{F}(V)$ lies in $\mm_{n}(k)$ by the commutativity of the first diagram.

Conversely, assume $V$ has such a basis $v_{1}, \dots, v_{n}$. Let $U$ be the $k$-span of these vectors inside $V$. Then by assumption, $U$ is an $A$-submodule of $V$. Now a pure tensor $\alpha \otimes r \in F \otimes_{k} A = A^{F}$ acts on a basis element $v_{i}$ by
\begin{align*}
 (\alpha \otimes r) \cdot v_{i} = (\alpha \cdot (1 \otimes r)) \cdot v_{i}
= \alpha \cdot (rv_{i}) \, .
\end{align*}
Note that $rv_{i} \in U$ here.
On the other hand, $U^{F}$ has an $F$-basis consisting of $1 \otimes u_{i}$'s and we have
\begin{align*}
 (\alpha \otimes r) \cdot (1 \otimes u_{i}) = (\alpha \otimes ru_{i}) = \alpha \cdot (1 \otimes ru_{i}) = \alpha \cdot (r(1 \otimes u_{i}))
\end{align*}
so the bijection $v_{i} \leftrightarrow 1 \otimes u_{i}$ preserves the $A^{F}$-action and $V \cong U^{F}$.
\end{proof}

\begin{thm} \label{chain-ext}
Given field extensions $k \subseteq E \subseteq F$, the following are equivalent:
\begin{birki}
 \item $F$ is a splitting field for $A$ and every simple left $A^{F}$-module can be written in $E$.
 \item $E$ is a splitting field for $A$.
\end{birki}
\end{thm}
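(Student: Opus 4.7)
The plan is to prove both implications using Theorem~\ref{abs-simple-defn}(3) (absolute simplicity is detected by a one-dimensional endomorphism ring) together with the Hom-compatibility of Lemma~\ref{iso} and Corollary~\ref{isoAlg}. Throughout I silently use that extension of scalars is transitive: $(A^E)^F \cong A^F$ and $(T^E)^F \cong T^F$, so there is no ambiguity in writing $T^F$ for the $A^F$-module $F \otimes_E T$ attached to an $A^E$-module $T$.

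For (2)~$\Rarr$~(1), let $V$ be a simple $A^F$-module. To write it in $E$ I first extract a simple $A^E$-submodule $T$ from the restriction of $V$: the only subtlety is that $V$ need not be finite-dimensional over $E$, but for any nonzero $v \in V$ the cyclic $A^E$-submodule $A^E \cdot v$ is finite-dimensional (since $\dim_E A^E = \dim_k A < \infty$), so it contains a simple $A^E$-submodule $T$. The multiplication $\alpha \otimes t \mapsto \alpha \cdot t$ defines a nonzero $A^F$-linear map $T^F \to V$; since $T$ is absolutely simple by hypothesis~(2), $T^F$ is simple, and therefore the map is an isomorphism $T^F \cong V$. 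Absolute simplicity of $V$ is then automatic: for any $L \supseteq F$, $L$ also extends $E$, so $V^L \cong T^L$ is simple.

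For (1)~$\Rarr$~(2), fix a simple $A^E$-module $T$; I want to verify $\dim_E \End_{A^E}(T) = 1$ via Theorem~\ref{abs-simple-defn}. Inside the finite-dimensional nonzero $A^F$-module $T^F$ pick any simple $A^F$-submodule $V$. Since $F$ splits $A$, $V$ is absolutely simple, and by hypothesis $V \cong U^F$ for some $A^E$-module $U$, which is itself simple by Proposition~\ref{topdown}. Corollary~\ref{isoAlg} turns $\dim_F \End_{A^F}(V) = 1$ into $\dim_E \End_{A^E}(U) = 1$, so $U$ is absolutely simple. The inclusion $U^F \hookrightarrow T^F$ is a nonzero element of $\Hom_{A^F}(U^F, T^F) \cong [\Hom_{A^E}(U, T)]^F$ (Lemma~\ref{iso}), hence by Remark~\ref{zero} there is a nonzero $A^E$-map $U \to T$. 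Schur's lemma promotes this to $U \cong T$, transferring absolute simplicity from $U$ to $T$.

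I expect (1)~$\Rarr$~(2) to be the main obstacle, because it is there that the hypothesis about $F$ must be pushed back down to $E$; the whole argument hinges on Lemma~\ref{iso}'s isomorphism $\Hom_{A^F}(U^F, T^F) \cong [\Hom_{A^E}(U, T)]^F$, which is what allows us to recognize the preimage $U$ of a simple $A^F$-submodule of $T^F$ as $T$ itself.
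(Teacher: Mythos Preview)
Your proof is correct and takes a genuinely different route from the paper's in both directions. For (2)~$\Rightarrow$~(1) the paper applies $(-)^F$ to the decomposition $A^{E}/\Rad A^{E} \cong \bigoplus_S S^{n_S}$ and invokes Proposition~\ref{split-radical} to identify $(\Rad A^{E})^{F}$ with $\Rad A^{F}$, thereby concluding that every simple $A^F$-module is isomorphic to some $S^F$; you instead locate a simple $A^E$-submodule $T$ inside the restriction of a given simple $A^{F}$-module $V$ and use the counit $T^F \to V$ together with absolute simplicity of $T$. For (1)~$\Rightarrow$~(2) the paper shows, via an idempotent argument drawn from \cite[Theorem 7.3.8]{webb-rep-book}, that the $A^E$-modules $U$ with $U^F$ simple exhaust all simple $A^{E}$-modules; you instead pick a simple $U^{F} \hookrightarrow T^F$, pull this inclusion back through the isomorphism of Lemma~\ref{iso} to obtain a nonzero map $U \to T$, and finish with Schur. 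Your approach buys self-containment: everything rests on Lemma~\ref{iso} and Corollary~\ref{isoAlg} already established in these notes, with no appeal to Proposition~\ref{split-radical} or the external idempotent result. The paper's route, on the other hand, makes the bijection $S \mapsto S^F$ between simple $A^{E}$-modules and simple $A^{F}$-modules more structurally explicit.
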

\begin{proof}
 (1) $\Rarr$ (2): Let $\mathcal{V}$ be the set of isomorphism classes of simple (and hence absolutely simple) left $A^{F}$-modules. Then there is a set, say $\mathcal{U}$, of isomorphism classes of left $A^{E}$-modules such that 
the map
\begin{align*}
 \mathcal{U} &\to \mathcal{V} 
 \\
 [U] &\mapsto [U^{F}]
\end{align*}
is well-defined and surjective.

Note that whenever $[U] \in \mathcal{U}$ , the left $A^{E}$-module $U$ is simple by Proposition \ref{topdown}. Now let $K$ be an algebraically closed field containing $F$ (we may pick $K = \overline{F}$ for instance). Then whenever $[U] \in \mathcal{U}$, the $A^{K}$-module $U^{K} \cong (U^{F})^{K}$ is simple since $U^{F}$ is absolutely simple. 

Consequently, by part (2) of Theorem \ref{abs-simple-defn} whenever $[U] \in \mathcal{U}$, the left $A^{E}$-module $U$ is absolutely simple. We want to show that $\mathcal{U}$ contains every isomorphism class of simple left $A^{E}$-modules to deduce that $E$ is a splitting field for $A$. 

To that end, let $S$ be a simple left $A^{E}$-module. Then there exists an idempotent $e \in A^{E}$ such that $eS \neq 0$ but $e$ annihilates every other simple left $A^{E}$-module \cite[Theorem 7.3.8]{webb-rep-book}. Since $A^{E}$ embeds into $A^{F}$ we can consider $e$ as a (nonzero) idempotent in $A^{F}$. Thus there exists $[V] \in \mathcal{V}$ such that $eV \neq 0$. Now pick $[U] \in \mathcal{U}$ such that $V \cong U^{F}$. Then $e$ cannot annihilate $U$, which forces $U \cong S$, that is, $[S] = [U] \in \mathcal{U}$.

(2) $\Rightarrow$ (1): We can replace $E$ with $k$ and assume that $A$ is already split. Let $\mathcal{S}$ be a complete list of non-isomorphic simple left $A$-modules. So we have an $A$-linear isomorphism
\begin{align*}
 A / \Rad A \cong \bigoplus_{S \in  \mathcal{S}} S^{\, n_{S}} 
\end{align*}
for some $n_{S}$ (actually $n_{S} = \dim_{k} S$ but we don't need this fact here).  Applying $(-)^{F}$ here and using Proposition \ref{split-radical}, we get an $A^{F}$-linear isomorphism
\begin{align*}
A^{F} / \Rad A^{F} \cong \bigoplus_{S \in \mathcal{S}} \left( S^{F} \right)^{n_{S}} \, .
\end{align*}
Note that each $S^{F}$ is a simple $A^{F}$-module because each $S$ is absolutely simple. Thus every simple $A^{F}$-module must be isomorphic to some $S^{F}$. In other words, simple $A^{F}$-modules can be written in $k$. 

It remains to show that $F$ is a splitting field for $A$, that is, $A^{F}$ is a split $F$-algebra. So let $U$ be a simple left $A^{F}$-module and $F \subseteq K$ a field extension of $F$. By what we just showed, $U \cong S^{F}$ for some simple left $A$-module $S$. Since $A$ is split, $S^{K} = (S^{F})^{K} \cong U^{K}$ is a simple left $A^{K} = (A^{F})^{K}$-module.
\end{proof}

\begin{prop} \label{inter}
 Let $F$ be an algebraic extension of $k$ and $V$ a left $A^{F}$-module with $\dim_{F} V < \infty$. Then there exists an intermediate field $k \subseteq E \subseteq F$ with $[E:k] < \infty$ such that $V$ can be written in $E$.
\end{prop}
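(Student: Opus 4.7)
The plan is to reduce to Lemma \ref{write-in} applied with the extension $E \subseteq F$ in place of $k \subseteq F$, for a carefully chosen intermediate finite extension $E/k$. Pick any $F$-basis $v_{1}, \ldots, v_{n}$ of $V$, which identifies $\End_{F}(V) \cong \mathbb{M}_{n}(F)$ and yields a $k$-algebra homomorphism
\[
 \psi \colon A \to A^{F} \to \End_{F}(V) \cong \mathbb{M}_{n}(F) \, .
\]
By Lemma \ref{write-in} (with $E$ playing the role of $k$ and $A^{E}$ playing the role of $A$, using the canonical identification $(A^{E})^{F} \cong A^{F}$), it is enough to find a finite extension $k \subseteq E \subseteq F$ such that, under the same basis, the image of the composite $A^{E} \to A^{F} \to \End_{F}(V)$ lands in $\mathbb{M}_{n}(E)$.

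Since this composite is the $E$-linear extension of $\psi$, it lands in $\mathbb{M}_{n}(E)$ as soon as $\psi(A) \subseteq \mathbb{M}_{n}(E)$. Here I use that $A$ is finite-dimensional over $k$: fix a $k$-basis $a_{1}, \ldots, a_{d}$ of $A$, so that $\psi(a_{1}), \ldots, \psi(a_{d})$ contribute altogether at most $dn^{2}$ entries in $F$. Because $F/k$ is algebraic, each such entry is algebraic over $k$, so the subfield $E \subseteq F$ generated over $k$ by these finitely many entries is a finite extension of $k$.

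By construction $\psi(a_{j}) \in \mathbb{M}_{n}(E)$ for every $j$, and $k$-linearity of $\psi$ promotes this to $\psi(A) \subseteq \mathbb{M}_{n}(E)$. Applying Lemma \ref{write-in} to the extension $E \subseteq F$ now produces an $A^{E}$-module $U$ with $U^{F} \cong V$, which is precisely the statement that $V$ can be written in $E$. The main point to be careful about is the bookkeeping for Lemma \ref{write-in} under the identification $(A^{E})^{F} \cong A^{F}$; beyond that the argument rests on the standard fact that a finitely generated algebraic extension is finite.
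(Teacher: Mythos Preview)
Your proof is correct and follows essentially the same approach as the paper: choose an $F$-basis of $V$ and a $k$-basis of $A$, let $E$ be the subfield of $F$ generated over $k$ by the (finitely many, hence algebraic) matrix entries of the images of the $a_{j}$, and then invoke Lemma~\ref{write-in} for the extension $E \subseteq F$ with $A^{E}$ in place of $A$. Your write-up is in fact slightly more explicit than the paper's about why the image of $A^{E} \to A^{F} \to \mathbb{M}_{n}(F)$ lands in $\mathbb{M}_{n}(E)$ (namely, because this map is the $E$-linear extension of $\psi$).
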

\begin{proof}
 Let $v_{1}, \dots, v_{n}$ be an $F$-basis of $V$ with which we can identify $\End_{F}(V)$  with $\mm_{n}(F)$. Pick a $k$-basis $a_{1}, \dots, a_{t}$ for $A$ and let $B_{1}, \dots, B_{t}$ be their images under the composite map $A \rarr A^{F} \rarr \mm_{n}(F)$. Let $E$ be the subfield of $F$ generated by $k$ and the entries of $B_{i}$'s. Since $E$ is generated over $k$ by finitely many algebraic elements, $k \subseteq E$ is a finite extension and by construction $B_{i}$'s lie in $\mm_{n}(E)$. The $a_{i}$'s gets sent to an $E$-basis of $A^{E}$ under the natural map $A \rarr A^{E}$ and hence the composite 
 \[ 
 A^{E} \rarr (A^{E})^{F} = A^{F} \rarr \mm_{n}(F)
 \] 
has image contained in $\mm_{n}(E)$. Invoke Lemma \ref{write-in}.
\end{proof}

\begin{cor} \label{fin-split}
 $A$ has a splitting field which has finite degree over $k$.
\end{cor}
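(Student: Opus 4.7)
The plan is to combine the three main results already at our disposal: the algebraic closure $\overline{k}$ is a splitting field, Proposition \ref{inter} lets us descend a single simple module to a finite subextension, and Theorem \ref{chain-ext} lets us descend the whole splitting property once every simple is descended.

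First I would take $F = \overline{k}$, so that $F$ is a splitting field for $A$ by the observation after Theorem \ref{abs-simple-defn}. Because $A$ is finite-dimensional over $k$, the algebra $A^{F}$ is finite-dimensional over $F$, hence $A^{F}/\Rad A^{F}$ is a finite-dimensional semisimple $F$-algebra and therefore admits only finitely many isomorphism classes of simple modules, say $V_{1}, \ldots, V_{r}$, and each $V_{i}$ satisfies $\dim_{F} V_{i} < \infty$.

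Next, for each $i$, since $F/k$ is algebraic, I would apply Proposition \ref{inter} to produce an intermediate field $k \subseteq E_{i} \subseteq F$ with $[E_{i}:k] < \infty$ such that $V_{i}$ can be written in $E_{i}$. Let $E \subseteq F$ be the compositum $E_{1}\cdots E_{r}$; since each $[E_{i}:k]$ is finite and $r$ is finite, $[E:k] < \infty$. To see that every $V_{i}$ can still be written in $E$, I would note that if $V_{i} \cong U_{i}^{F}$ for some $A^{E_{i}}$-module $U_{i}$, then setting $W_{i} \coloneqq U_{i}^{E} = E \otimes_{E_{i}} U_{i}$, transitivity of scalar extension gives
\[
 W_{i}^{F} = F \otimes_{E} (E \otimes_{E_{i}} U_{i}) \cong F \otimes_{E_{i}} U_{i} = U_{i}^{F} \cong V_{i},
\]
so $V_{i}$ is indeed written in $E$.

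Finally, since $V_{1}, \ldots, V_{r}$ exhaust the simple left $A^{F}$-modules up to isomorphism and each can be written in $E$, condition (1) of Theorem \ref{chain-ext} is fulfilled for the tower $k \subseteq E \subseteq F$, hence by that theorem $E$ is a splitting field for $A$ of finite degree over $k$. The only step that requires a modicum of care is checking that writability in $E_{i}$ is preserved under enlarging to the compositum $E$, but this is just transitivity of extension of scalars; everything else is an immediate assembly of the previously established machinery.
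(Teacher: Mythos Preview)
Your proof is correct and follows essentially the same approach as the paper: take $F = \overline{k}$, use that $A^{F}$ has only finitely many simple modules, descend each one to a finite subextension via Proposition \ref{inter}, collect these into a single finite extension $E$, and invoke Theorem \ref{chain-ext}. The paper compresses your compositum-and-transitivity step into the phrase ``repeated applications of Proposition \ref{inter}'', which you have spelled out in full.
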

\begin{proof}
 Let $F = \overline{k}$. Being a finite-dimensional $F$-algebra, $A^{F}$ has (up to isomorphism) finitely many simple left modules. So by repeated applications of Proposition \ref{inter}, we get an intermediate field $k \subseteq E \subseteq F$ with $[E:k] < \infty$ such that every simple $A^{F}$-module can be written in $E$. Theorem \ref{chain-ext} applies and $E$ is a splitting field for $A$. 
\end{proof}

\bibliographystyle{hamsalpha}
\bibliography{stable-boy}

\end{document}